
\documentclass[letterpaper, 10pt, journal, twoside]{IEEEtran}


\IEEEoverridecommandlockouts                              



\usepackage[dvipdfmx]{graphicx}
\usepackage{amsmath, amssymb, bm, mathrsfs}
\usepackage{cite, color, graphicx, subcaption, comment}
\usepackage[colorlinks,bookmarksopen,bookmarksnumbered,citecolor=red,urlcolor=red]{hyperref}

\usepackage{amssymb, bm, mathrsfs, color, subcaption,booktabs}

\newtheorem{theorem}{Theorem}[section]

\newtheorem{definition}[theorem]{Definition}

\newtheorem{remark}[theorem]{Remark}

\title{
Stability and $\ell^2$-gain Analysis of Adaptive Control Systems with Event-triggered 
Try-once-discard Protocols
}

\author{Masashi~Wakaiki,~\IEEEmembership{Member,~IEEE}
\thanks{
	This work was supported by JSPS KAKENHI Grant Numbers JP17K14699.
}
\thanks{Masashi Wakaiki is with the 
	Graduate School of System Informatics, Kobe University, 1-1 Rokkodai, 
	Nada, Kobe, 657-8501, Japan  (e-mail:~{\tt  wakaiki@ruby.kobe-u.jp}). }
}

\begin{document}

\maketitle
\thispagestyle{empty}
\pagestyle{empty}

\begin{abstract}
This paper addresses the stability and $\bm \ell^2$-gain 
analysis of adaptive control systems
with event-triggered try-once-discard protocols.
At every sampling time, an event trigger evaluates an error
between the current value and the last released value of each measurement and determines
whether to transmit the measurements and which measurements to transmit, based on the try-once-discard protocol and
given lower and upper thresholds.
For gain-scheduling controllers and switching controllers that
are adaptive to the maximum error of the measurements, we obtain
sufficient conditions for the practical stability and upper bounds on
the $\bm \ell^2$-gain of the closed-loop system.

		\begin{IEEEkeywords} 
			Stability of linear systems, Networked control systems, LMIs.
		\end{IEEEkeywords} 
\end{abstract}

\section{Introduction}
\IEEEPARstart{I}{n} standard sampled-data control,
measurements are periodically transmitted to controllers.
For this time-triggered control, many techniques of designing controllers
have been developed during last decades
such as the lifting 
approach \cite{Bamieh1992, Yamamoto1994},
the fast sample/fast hold approach \cite{Anderson1992}, and
the frequency response operator approach \cite{Araki1996}.
However, time-triggered control may lead to redundant transmissions, which
waste energy and network resources.

As an
alternative control paradigm, 
event-triggered control \cite{Tabuada2007, Heemels2008} 
has been developed.
In the event-triggered approach,
transmission intervals are determined by a predefined condition on the measurements,
and energy and network resources are used only if
the measurements have to be transmitted.
The effectiveness of event-triggered techniques has been illustrated, e.g.,
through 
security in networked control systems in \cite{Shoukry2016, Dolk2017, Cetinkaya2017} and
an experiment of mobile robots in \cite{Cervantes2015, Socas2017}, and
there  are many researches on the analysis and synthesis
of event-triggered control as surveyed in \cite{Heemels2012, Liu2014}.

To avoid network congestion,
medium access protocols such as the Round-Robin (RR) protocol
and the Try-Once-Discard (TOD) protocol
govern the access of nodes
to networks.
The RR protocol assigns transmissions to the nodes in a prefixed order.
In contrast, the TOD protocol determines which nodes should transmit their data
by evaluating the largest error between the current value and the last released value
of the node signals; see \cite{Walsh2001,  Walsh2002, Nesic2004, Davic2007, Donkers2011} and references therein
for control methods based on the TOD protocol. 

In this paper, we study the stability and $\ell^2$-gain 
analysis of adaptive control systems with
event-triggered TOD protocols.
As the periodic event-triggered control proposed in \cite{Heemels2013, Heemels2013_Automatica},
an event trigger periodically evaluates errors between the current values
and the last released values of the measurements and determine whether to transmit the measurements
and which measurements to transmit, based on the TOD protocol and given lower and upper thresholds.

In the usual event-triggered control, e.g., of
\cite{Tabuada2007, Heemels2008, Heemels2013, Heemels2013_Automatica,
	Donkers2012_EV, Garcia2013,Hao2017},
the controller knows that if it does not receive 
the measurements, then
the errors between the current values and the last released values
of the measurements
are smaller than a prefixed threshold.
In addition to such information, under the event-triggered TOD protocol,
the controller can know that the errors of the measurements
that are not transmitted
are smaller than the error of the transmitted measurement.
To exploit this additional information, we here 
use two types of adaptive controllers: gain-scheduling controllers and
switching controllers, both of which change their feedback gains depending
on the error between the current value and the previous value of the transmitted
measurement.
We see from a numerical simulation in Sec.~IV that these adaptive controllers
achieve faster response and are more robust against quantization noise
than single-gain controllers.

Conditions that the errors satisfy in our closed-loop system 
are more complicated than those in the usual event-triggered control.
We describe these conditions by systems of linear inequalities
in terms of the measurements and their errors, and
employ the technique of the S-procedure for piecewise affine systems developed
in \cite{Johansson1998,Feng2002}.
Moreover, in the Lyapunov analysis and the S-procedure,
we use time-varying parameters dependent on
the error of the transmitted measurement, inspired by the stability
analysis of systems with time-varying delays developed in
\cite{Donkers2011, WakaikiCDC2015}.
This makes the analysis of our adaptive control system
less conservative.

This paper is organized as follows.
The closed-loop system and the event-triggered TOD protocol
we consider are described in Section II.
In Section III, 
we obtain sufficient conditions for 
the practical stability and upper bounds on the $\ell^2$-gain
of the closed-loop system
under gain-scheduling control and switching control.
An illustrative example is presented in Section IV.
We draw concluding remarks and future work in Section V.

\paragraph*{Notation}
Let $\mathbb{Z}_+$ denote the set of nonnegative integers.
For a matrix $M \in \mathbb{R}^{m \times n}$, let us denote its transpose by $M^{\top}$.
For a vector $v = [v_1~\cdots~ v_n]^{\top}\in \mathbb{R}^n$, the inequality 
$v \geq 0$ means that 
every element $v_i$ satisfies
$v_i \geq 0$.
On the other hand, for a square matrix $P$,
the notation $P \succ 0$ means that $P $ is symmetric and
positive definite.
For simplicity, 
we write a partitioned symmetric
matrix 
$\begin{bmatrix}
Q & W \\ W^{\top} & R
\end{bmatrix}$ as
$\begin{bmatrix}
Q & W \\ \star & R
\end{bmatrix}$.
We denote by $\text{diag}( M_1,\dots,M_p )$ a block-diagonal matrix with
diagonal blocks $M_1,\dots,M_p$.

For a vector $v = [v_1~\cdots~ v_n]^{\top}\in \mathbb{R}^n$,
its Euclidean norm is
denoted by $\|v\| = (v^\top v)^{1/2}$. Let us denote
the maximum norm of $v$ by $\|v\|_{\infty} = \max\{|v_1|,\dots, |v_n|\}$ and
its corresponding induced norm of $A \in \mathbb{R}^{ m\times n}$ by
$\|A\|_{\infty}  = \sup \{  \|Av \|_{\infty} :~
v\in \mathbb{R}^{ n},~\|v\|_{\infty} = 1 \}$.
We denote by $\ell^2$ the space of square-summable sequences over $\mathbb{Z}_+$
with norm $\|v\|_{\ell^2} = (\sum_{k=0}^{\infty} v^{\top}[k] v[k])^{1/2}$.

For every $\alpha > 0$ and $\beta \leq 0$, 
we set $\alpha / 0 = +\infty$ and $\beta / 0 = -\infty$.
We define
${\mathcal J}_n :=  \{(j_1,\dots,j_n):~
j_p \in \{1,-1\}~(1\leq p \leq n)\}$. 

\section{Problem Statement}
Consider a linear time-invariant discrete-time
system:
\begin{equation}
\label{eq:plant}
\begin{cases}
x[k+1] = Ax[k] + Bu[k] \\
y[k] = Cx[k] + Dw[k] \\
z[k] = Fx[k],
\end{cases}
\end{equation}
where $x[k] \in \mathbb{R}^{n_x}$,
$u[k] \in \mathbb{R}^{n_u}$, $y[k] \in \mathbb{R}^{n_y}$  are 
the state, the input, and the output of the plant, respectively.
In addition, $w[k] \in \mathbb{R}^{n_w}$ and $z[k] \in \mathbb{R}^{n_z}$
are the noise and the performance signal.
%

We transmit the output $y[k]$ through digital communication channels,
by using the event-triggered TOD protocol below.
Let $y=[y_1,\dots,y_{n_y}]^{\top}$, and
we denote by $l_k^{i}$ the latest time $(\leq k)$ at which
the $i$th measurement $y_i$ is transmitted.

{\bf Event-triggered TOD protocol:}
Set parameters $\Delta_{\max} > \Delta_{\min}
\geq 0$, $\lambda_i>0$, and $\delta_i \geq 0$ 
for all $i=1,\dots,{n_y}$.
The event trigger stores information on the last 
released data $y_i[l_{k-1}^{i}]$ and calculates
the error $e_i'[k] := y_i[l_{k-1}^{i}] - y_i[k]$ and
\begin{equation*}
E_i' := \frac{|e'_i[k]| - \delta_i}{\lambda_i |y_i[k]|}\qquad \forall i=1,\dots,n_y.
\end{equation*}
The parameter $\delta_i$ is used for the absolute error of $y_i$, and
if $\delta_i = 0$, then $E_i'$ is the relative error of $y_i$ weighted by $\lambda_i$.
Exploiting this value $E_i'$,
the event-triggered TOD protocol sends $y_i[k]$ based on the
following rule:

\begin{enumerate}
	\item
	If $\max_i E_i' < \Delta_{\min}$, then
	the event trigger discards
	all the measurements $y_1[k],\dots,y_{n_y}[k]$.
	
	\item
	If $\max_i E_i'  \geq \Delta_{\max}$, then
	the event trigger sends all the measurements $y_i[k]$ satisfying $E_i'  \geq \Delta_{\max}$.
	
	\item
	Otherwise, the event trigger sends a single measurement $y_i[k]$ that achieves
	$E_i' = \max_i E_i' $, as the usual TOD protocol.
	If several measurements achieve the maximum,
	then either one of them
	can be chosen. \hspace*{\fill} $\Box$
\end{enumerate}

Note that if $\max_i E_i'  \geq \Delta_{\max}$, then
two or more measurements may be sent to the controller.
Define 
\begin{equation*}
E[k] := 
\begin{cases}
\Delta_{\min} & \text{if $\max_{i} E_i' <\Delta_{\min}$} \\
\Delta_{\max} & \text{if $\max_{i} E_i' \geq \Delta_{\max}$} \\
\max_{i} E_i' & \text{otherwise}
\end{cases}
\quad \forall k \in \mathbb{Z}_+,
\end{equation*}
which is an upper bound of the weighted relative errors at time $k$
in the case $\delta_i = 0$. Indeed,
under the event-triggered TOD protocol, the error 
$
e_i[k] := y_i[l_k^{i}] - y_i[k]
$
satisfies
\begin{equation}
\label{eq:error_cond}
|e_i[k]| \leq \lambda_i E[k]\cdot |y_i[k]| + \delta_i
~~~  \forall k \in \mathbb{Z}_+,~\forall i=1,\dots,n_y.
\end{equation}
If $y_i[k]$ is transmitted, then $l_k^{i} = k$; otherwise $l_k^{i} =l_{k-1}^{i} < k$.
Therefore we can write the update equations for 
the last released measurement $y_i[l_k^{i}]$ and its error $e_i[k]$
as
\begin{align*}
y_i[l_k^{i}] &= 
\begin{cases}
y_i[k] & \text{if $y_i[k]$ is transmitted} \\
y_i[l_{k-1}^{i}] & \text{otherwise}
\end{cases} \\
e_i[k] &=
\begin{cases}
0 & \text{if $y_i[k]$ is transmitted}  \\
e_i'[k]\hspace{13.2pt} & \text{otherwise}.
\end{cases}
\end{align*}

Note that the controller can also compute $E[k]$ from the transmitted data, considering
the following two cases:
\begin{enumerate}
	\item If the controller receives no data, then $E[k] = \Delta_{\min}$.
	\item
	Otherwise, 
	the controller receives one or more measurements, and 
	let us denote one of them by $y_i$.
	Since $y_i[l_k^{i}] = y_i[k]$, it follows that
	\[
	E[k] =
	\min\left\{\Delta_{\max},~
	\frac{\left| y_i[l_{k-1}^{i}] - y_i[l_k^{i}]\right| - \delta_i}{\lambda_i|y_{i}[l_{k}^{i}]|}
	\right\}.
	\]
\end{enumerate}

Defining the latest information on the measurement, $\tilde y[k]$, by
$
\tilde y[k] := 
\begin{bmatrix}
y_1[l_k^{1}] & \cdots &
y_{n_y}[l_k^{n_y}]
\end{bmatrix}^{\top},
$
we can therefore use the following adaptive controller:
\begin{equation}
\label{eq:control_input}
u[k] = K(E[k])\tilde y[k].
\end{equation}


Fig.~\ref{fig:closed-loop} illustrates the closed-loop system.
We define the practical stability, the decay rate,
and the $\ell^2$-gain of this closed-loop system.
\begin{definition}[Practical stability and decay rate]
	Consider the closed-loop system discussed above.
	The system is {\em practically stable} if 
	for every $\bar w \geq 0$, there exists an ultimate bound $\eta \geq 0$ such that
	\[
	\|w[k]\|_{\infty} \leq \bar w~~\forall k \in \mathbb{Z}_+
	~~\Rightarrow~~
	\limsup_{k \to \infty} \|x[k]\| \leq \eta.
	\]
	Furthermore, we say that (an upper bound of) {\em decay rate} is $\sigma$
	if there exists $M \geq 1$ such that 
	\begin{align*}
	w[k]=0,~&\rho_i=0\quad \forall k \in \mathbb{Z}_+,~\forall i =1,\dots,n_y \\
	&\Rightarrow
	\|x[k]\| \leq M \sigma^k \|x[0]\| \quad \forall k \in \mathbb{Z}_+ .
	\end{align*}
	\vspace{-8pt}
\end{definition}
\begin{definition}[$ \ell^2$-gain]
	Consider the closed-loop system discussed above,
	and assume $w \in \ell^2$. We say that
	the {\em $\ell^2$-gain} (from $w$ to $z$) is less than or equal to $\gamma$
	if $z \in \ell^2$ and
	\[
	x[0] = 0 \text{~and~}
	\delta_i = 0~~\forall i=1,\dots,n_y
	~~ \Rightarrow ~~
	\|z\|_{\ell^2} \leq \gamma \|w\|_{\ell^2}.
	\]
	\vspace{-8pt}
\end{definition}
In the next section, we obtain sufficient conditions for the practical stability 
and upper bounds of the $\ell^2$-gain.
\begin{figure}
	\centering
	\includegraphics[width = 7.5cm]{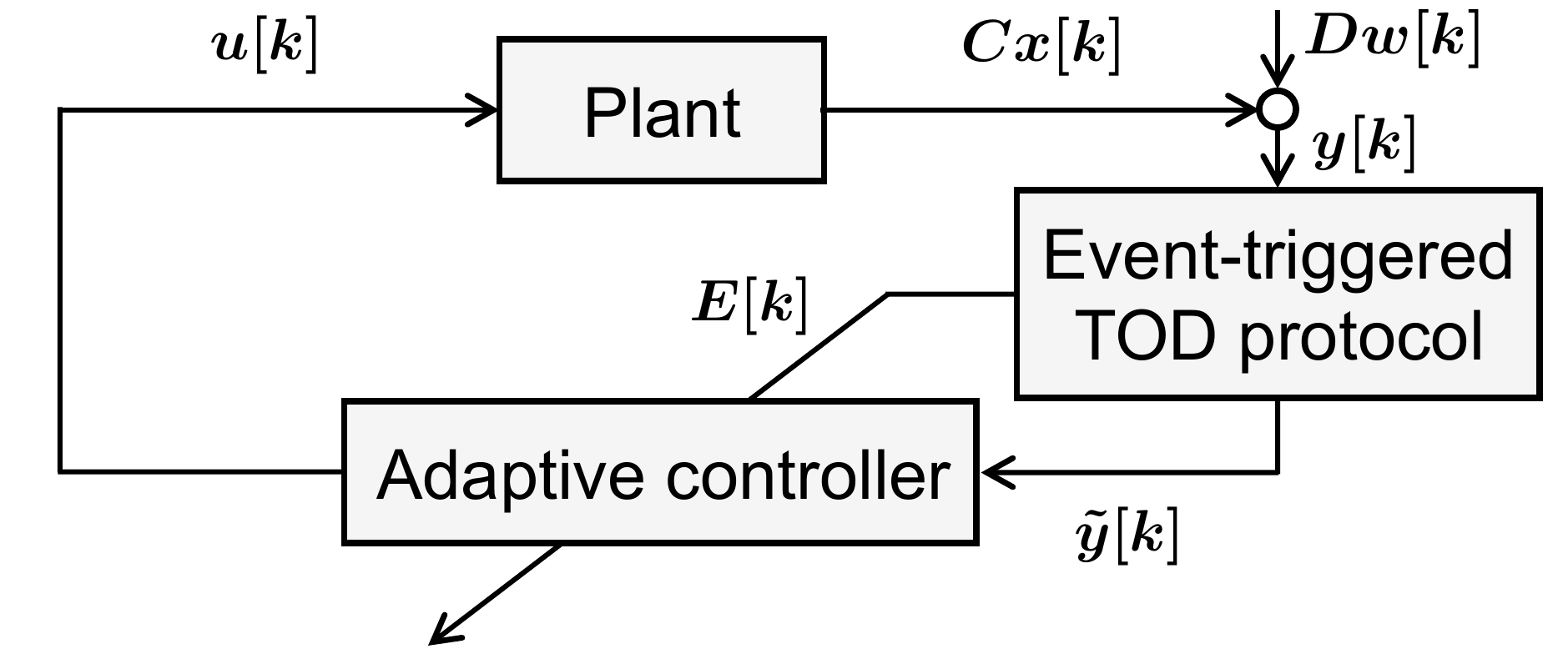}
	\caption{Closed-loop system.}
	\label{fig:closed-loop}
	\vspace{-0pt}
\end{figure}




\section{Main Results}
\subsection{Gain-scheduling control}
Define 
$
\Delta_1 := \Delta_{\min}$ and $\Delta_2 := \Delta_{\max}.
$
Given two feedback gains $K_1$ and $K_2$,
we set an adaptive gain $K(E[k])$ to be
\begin{equation}
\label{eq:GS_control}
K(E[k]) :=
K[k] :=
\sum_{p=1}^2
\alpha_p[k]
K_p,
\end{equation}
where
\begin{align}
\label{eq:alpha_def}
\alpha_1[k] := 	\frac{\Delta_2 - E[k]}
{\Delta_2 - \Delta_1},\quad 
\alpha_2[k] :=
\frac{E[k] - \Delta_1}{\Delta_2 - \Delta_1}.
\end{align}
Note that $\alpha_1[k]$ and $\alpha_2[k]$ satisfy
\begin{equation}
\label{eq:alpha_prop}
0\leq \alpha_1[k],~\alpha_2[k] \leq 1,\quad
\sum_{p=1}^2 \alpha_p[k] = 1\quad  \forall k \in \mathbb{Z}_+.
\end{equation}

Using the technique of the S-procedure for piecewise affine systems in 
\cite{Johansson1998,Feng2002},
we obtain the following result:
\begin{theorem}
	\label{thm:Stability_GS}
	Consider the closed-loop system discussed in Section II, and
	let $\sigma \in (0,1]$.
	The closed-loop system is practically stable with an ultimate bound 
	\begin{equation}
	\label{eq:ultimate_bound}
	\eta \!=\! 
	\rho \cdot \max_{i=1,\dots,n_y} \left((\Delta_{\max} \lambda_i \!+\! 1)
	\bar w  \|D\|_{\infty}  \!+\! \delta_i\right)~\text{for some $\rho \!\geq\! 0$}
	\end{equation}
	and with a decay rate $\sigma$,
	and its $\ell^2$-gain is less than or equal to $\gamma$
	if
	there exist a positive definite matrix $P_p\in \mathbb{R}^{n_x \times n_x}$,
	a symmetric matrix with nonnegative entries 
	\begin{equation*}
	\begin{bmatrix}
	L_{p,\bm j} & N_{p,\bm j} \\
	\star & M_{p,\bm j}
	\end{bmatrix}\in \mathbb{R}^{2n_y \times 2n_y},
	\end{equation*}
	and a symmetric matrix $Z_{p,\bm j}\in \mathbb{R}^{n_y \times n_y}$
	($p=1,2$, $\bm j = (j_1,\dots,j_{n_y})\in {\mathcal J}_{n_y}$) such that the following LMI 
	\begin{align}
	\label{eq:LMI_GS}
	\begin{bmatrix}
	\Gamma_{p,q,\bm j} & \Phi_{p,q,\bm j} \\
	\star & \Xi_{q,\bm j}
	\end{bmatrix}
	\succ 0
	\end{align}
	is feasible
	for every $p,q=1,2$ and $\bm j \in {\mathcal J}_{n_y}$,
	where 
	\begin{align*}
	N_{q,\bm j}^+& :=
	N_{q,\bm j}
	+ N_{q,\bm j}^{\top}, \quad
	N_{q,\bm j}^- :=
	N_{q,\bm j}
	- N_{q,\bm j}^{\top} \\ 
	\Lambda_{\bm j}
	&:=	
	{\rm diag}
	(\lambda_1j_1,\dots, \lambda_{n_y} j_{n_y}) 
	\\
	R_{q, \bm j} &:=
	\begin{bmatrix}
	L_{q,\bm j}+ M_{q,\bm j}
	+  N_{q,\bm j}^+ &  
	L_{q,\bm j} - M_{q,\bm j} 
	-  N_{q,\bm j}^-\\
	\star  & 
	L_{q,\bm j} + M_{q,\bm j} 
	+ Z_{q,\bm j}
	\end{bmatrix} \\
	\Gamma_{p,q,\bm j} &:=
	\text{diag}(\sigma^2 P_p, \gamma^2 I, N_{q,\bm j}^+ + Z_{q,\bm j}) \\
	\Xi_{q,\bm j} &:=
	\text{diag}(P_q, I , R_{q,\bm j}) \\
	\Phi_{p,q,\bm j} &:=
	\begin{bmatrix}
	A+BK_pC & BK_pD& BK_p \\ 
	F & 0 & 0 \\
	\Delta_p\Lambda_{\bm j}C & \Delta_p\Lambda_{\bm j} D& 0 \\
	0 & 0 &I
	\end{bmatrix}^{\top}
	\Xi_{q,\bm j}.
	\end{align*}
	\vspace{-8pt}
\end{theorem}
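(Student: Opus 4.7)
I would take a parameter-dependent Lyapunov function $V[k] := x[k]^\top P[k] x[k]$ with $P[k] := \sum_p \alpha_p[k] P_p$, and rewrite the closed loop as $x[k+1] = (A + BK[k]C)x[k] + BK[k]Dw[k] + BK[k]e[k]$ with $e[k] := \tilde y[k] - y[k]$ satisfying the TOD bound (2). The crucial structural observation is that $\Phi_{p,q,\bm j} = \Psi_{p,\bm j}^\top\,\Xi_{q,\bm j}$, where $\Psi_{p,\bm j}$ is the ``system matrix'' whose rows generate $x[k+1]$, $z[k]$, $\Delta_p\Lambda_{\bm j}y[k]$, $e[k]$ under the assumption $K[k]=K_p$, $E[k]=\Delta_p$. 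Summing the LMI (3) over $(p,q)\in\{1,2\}^2$ with the bilinear weights $\alpha_p[k]\,\alpha_q[k+1]$ preserves strict positive definiteness and, using $\sum_p\alpha_p K_p = K[k]$, $\sum_p\alpha_p\Delta_p = E[k]$ and similar identities, collapses to a single lumped matrix inequality with $\Psi[k]^\top\Xi[k+1]$ in the off-diagonal block and $P[k]$, $P[k+1]$, $R[k+1]$, $N^+[k+1]+Z[k+1]$ on the diagonals, where $L[k+1]:=\sum_q\alpha_q[k+1] L_{q,\bm j}$ and similarly for $M,N,Z,R$.

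\textbf{Schur complement and S-procedure.} Applying the Schur complement to the lumped inequality (valid since $\Xi[k+1]\succ 0$) and reading $\Psi[k]v = [x[k+1];\,z[k];\,E[k]\Lambda_{\bm j}y[k];\,e[k]]$ for $v := [x[k];w[k];e[k]]$ yields
\begin{equation*}
V[k+1] - \sigma^2 V[k] + z^\top z - \gamma^2 w^\top w < -\bigl(u^\top M_1 u + 2 u^\top M_2 e + e^\top M_3 e\bigr),
\end{equation*}
with $u := E[k]\Lambda_{\bm j}y[k]$, $M_1 := L+M+N^+$, $M_2 := L-M-N^-$, $M_3 := L+M-N^+$ (all at time $k+1$), the $Z[k+1]$-contributions having cancelled between the two sides. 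I then pick $\bm j\in\mathcal J_{n_y}$ so that $j_i y_i[k]\geq 0$ for every $i$; then $u\geq 0$ componentwise and (2) gives $|e_i|\leq\tilde y_i$ for $\tilde y := u + \delta$, so $\xi_1 := \tilde y + e$ and $\xi_2 := \tilde y - e$ are both componentwise nonnegative. The piecewise-affine S-procedure applied with the nonneg-entry multiplier $\bigl[\begin{smallmatrix}L[k+1]&N[k+1]\\ \star&M[k+1]\end{smallmatrix}\bigr]$ produces $\tilde y^\top M_1 \tilde y + 2\tilde y^\top M_2 e + e^\top M_3 e \geq 0$; substituting $\tilde y = u+\delta$ upper-bounds the right-hand side above by $2u^\top M_1\delta + \delta^\top M_1\delta + 2\delta^\top M_2 e$.

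\textbf{Conclusions.} For $\delta_i = 0$ the right-hand side vanishes, so standard telescoping with $x[0]=0$ and $\sigma\leq 1$ gives $\|z\|_{\ell^2}\leq\gamma\|w\|_{\ell^2}$, and $w\equiv 0$ together with the uniform two-sided eigenvalue bounds on $P[k]\in\mathrm{conv}(P_1,P_2)$ gives $\|x[k]\|\leq M\sigma^k\|x[0]\|$. For practical stability, I bound the $\delta$-dependent right-hand side using $\|u\|\leq\Delta_{\max}\|\Lambda_{\bm j}\|\|y\|$, $\|y\|\leq\|C\|\|x\|+\|D\|_\infty\bar w$, and $\|e\|\leq\Delta_{\max}\|\Lambda_{\bm j}\|\|y\|+\|\delta\|$; a Young-type inequality absorbs the $\|x\|$-dependence into a small fraction of $V[k]$, leaves the residual constants, and produces an ISS-type estimate of $V[k]$ together with the ultimate bound (7).

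\textbf{Main obstacle.} The most delicate bookkeeping is matching sign conventions: the $(1,2)$-block of $R_{q,\bm j}$ is $+(L-M-N^-)$, which only matches the cross term in the S-procedure expansion if one orders $\xi_1 = \tilde y + e$, $\xi_2 = \tilde y - e$; the opposite ordering flips the sign and renders the two quadratics incompatible. A related subtlety is that $Z_{q,\bm j}$ plays no role as an S-procedure multiplier at all---it appears with equal coefficients on the two sides of the Schur inequality and cancels---its sole purpose is to provide enough slack to make $R_{q,\bm j}\succ 0$, which is needed so the Schur step applies. Once these issues are resolved, the remainder is a mechanical combination of Schur, convexity, and S-procedure.
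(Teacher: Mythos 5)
Your proposal is correct and follows essentially the same route as the paper: a parameter-dependent Lyapunov function $x^{\top}P[k]x$, the bilinear combination $\sum_{p,q}\alpha_p[k]\alpha_q[k+1]$ of the vertex LMIs (exploiting $\sum_p\alpha_p K_p=K[k]$ and $\sum_p\alpha_p\Delta_p=E[k]$), a Schur complement, and the nonnegative-multiplier S-procedure on $[u+e;\,u-e]\geq 0$ with the sign selection $\bm j$, including the correct identification of the role of $Z_{q,\bm j}$ and of the sign of the $(1,2)$-block of $R_{q,\bm j}$. The only (immaterial) deviation is in the practical-stability step, where you shift the S-procedure vector by $\delta$ and bound the residual cross terms, while the paper instead splits $e+Dw$ into a relative part and a bounded part before applying the same Young-inequality ISS argument.
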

\begin{proof}
	1. First we consider the case $\delta_i = 0$
	for every $i=1,\dots,n_y$ and show that
	the decay rate and the $\ell^2$-gain of the
	closed-loop system are less than or equal to 
	$\sigma$ and $\gamma$
	if the LMI in \eqref{eq:LMI_GS}
	is feasible for every $p,q=1,2$ and $\bm j \in {\mathcal J}_{n_y}$.
	
	Inspired by the stability analysis in \cite{Donkers2011,WakaikiCDC2015},
	we define 
	a time-dependent Lyapunov function $V(k,x)$ by
	\begin{align}
	V(k,x) &:= x^{\top} P[k] x,~
	\text{where~} P[k] := \sum_{p=1}^2
	\alpha_p[k]
	P_p,
	\label{eq:parameter_dep_lyap}
	\end{align}
	where $P_1,P_2$ are positive definite.
	For simplicity of notation, define $V_k := V(k,x[k])$.
	
	The dynamics of the closed-loop system is given by
	\begin{equation*}
	x[k+1] = 
	(A + BK[k]C) x[k] +BK[k]Dw[k]+ BK[k]e[k],
	\end{equation*}
	where $e[k] := \tilde y[k] - y[k]$.
	Define
	\begin{align*}
	\xi[k] &:= 
	\begin{bmatrix}
	x[k] \\ w[k] \\e[k]
	\end{bmatrix},~~
	\Omega_1[k] := 	\text{diag}(\sigma^2 P[k],\gamma^2I, 0) \\
	\Omega_2[k+1] &:=
	H^{\top}[k]
	\begin{bmatrix}
	P[k+1] & 0 \\ 0 & I
	\end{bmatrix}
	H[k] \\
	H[k] &:= 
	\begin{bmatrix}
	A+BK[k]C & BK[k] D & BK[k] \\
	F & 0 & 0
	\end{bmatrix}
	\end{align*}
	Then the Lyapunov function $V_k$ satisfies
	\begin{align}
	&\sigma^2 V_k - V_{k+1} + \gamma^2 \|w[k]\|^2 -\|z[k]\|^2 \notag \\
	&\qquad =
	\xi[k]^{\top} (\Omega_1[k] - \Omega_2[k+1])\xi[k]. \label{eq:V_Omega_eq}
	\end{align}
	
	Under the event-triggered TOD protocol described in Sec.~II,
	$\xi[k]$ satisfies
	$
	W[k] 
	\xi[k] \geq 0
	$
	for every  $k \in \mathbb{Z}_+$,
	where
	\begin{align}
	\label{eq:W_def}
	\begin{array}{l}
	W[k] := 
	\begin{bmatrix}
	I & I \\
	I & -I
	\end{bmatrix}J[k] \vspace{3pt}\\
	J[k] := 
	\begin{bmatrix}
	E[k] \Lambda_{\bm j [k]} & 0 \\
	0 & I
	\end{bmatrix}
	\begin{bmatrix}
	C & D & 0 \\
	0 & 0 & I
	\end{bmatrix}.
	\end{array}
	\end{align}
	for some $\bm j[k]
	= \big(j_1[k], \cdots,j_{n_y}[k] \big) \in {\mathcal J}_{n_y}$.
	Here every $j_i[k] \in \{1,-1\}$ is chosen such that 
	$	|y_i[k] | = j_i[k] y_i[k].$
	Thus, for all $\xi[k]\not=0$, if the statement
	\begin{align}
	\label{st_Spro}
	&W[k] \xi[k ] \!\geq\! 0 ~~ \Rightarrow
	~~
	\xi^{\top}[k] (\Omega_1[k] \!-\! \Omega_2[k\!+\!1])\xi[k]
	\!>\! 0
	\end{align}
	is true for every $k \in \mathbb{Z}_+$, 
	then we have  from \eqref{eq:V_Omega_eq} 
	that 
	\begin{equation}
	\label{eq:V_k_w_z}
	\sigma^2 V_k - V_{k+1} + \gamma^2 \|w[k]\|^2 -\|z[k]\|^2 > 0\quad \forall k \in \mathbb{Z}_+.
	\end{equation}
	
	For matrices with nonnegative entries 
	\begin{equation*}
	\begin{bmatrix}
	L_{1,\bm j} & N_{1,\bm j} \\
	\star & M_{1,\bm j}
	\end{bmatrix},\qquad
	\begin{bmatrix}
	L_{2,\bm j} & N_{2,\bm j} \\
	\star & M_{2,\bm j}
	\end{bmatrix}
	\qquad \forall \bm j \in {\mathcal J}_{n_y},
	\end{equation*}	
	define the matrix $\Upsilon[k+1]$ by
	\begin{align*}
	\Upsilon[k+1] &:=
	\begin{bmatrix}
	L[k+1] & N[k+1] \\
	\star & M[k+1] 
	\end{bmatrix} \\
	&:= \sum_{p=1}^2
	\alpha_p[k+1] 	
	\begin{bmatrix}
	L_{p,\bm j[k]} & N_{p,\bm j[k]} \\
	\star & M_{p,\bm j[k]}
	\end{bmatrix}\quad  \forall k \in \mathbb{Z}_+.
	\end{align*}
	Since the elements of $\Upsilon[k+1]$ are also nonnegative,
	it follows from the S-procedure \cite{Boyd1994} that if the matrix inequality
	\begin{equation}
	\label{eq:matrix_ineq1}
	\Omega_1[k] - \Omega_2[k+1]
	-
	W[k]^{\top}
	\Upsilon [k+1] W[k]	
	\succ 0			
	\end{equation}
	holds 
	for every $k \in \mathbb{Z}_+$, then the statement \eqref{st_Spro} is true.
	
	We shall show that if the LMI in \eqref{eq:LMI_GS} 
	is feasible for every $p,q=1,2$ and $\bm j \in {\mathcal J}_{n_y}$,
	then the matrix inequality \eqref{eq:matrix_ineq1} holds for every $k \in \mathbb{Z}_+$.
	Using arbitrary symmetric matrices $Z_{1,\bm j}$ and $Z_{2,\bm j}$ ($\bm j \in {\mathcal J}_{n_y}$), we define 
	\[
	Z[k+1] := \sum_{p=1}^2\alpha_p[k+1] Z_{p,\bm j[k]}\qquad \forall k \in \mathbb{Z}_+,
	\] 
	and set
	\begin{align*}
	R[k] &:=
	\begin{bmatrix}
	L[k] + M[k] 
	+ N^+[k] &  
	L[k] - M[k] 
	- N^-[k]\\
	\star  & 
	L[k] + M[k] 
	+ Z[k]
	\end{bmatrix} \\
	T[k] &:=
	\text{diag}(0, 0, N^+[k]+ Z[k])\qquad \forall k \in \mathbb{N},
	\end{align*}
	where 
	$N^+[k]:= N[k]+N[k]^{\top}$ and 
	$N^-[k]:= N[k]-N[k]^{\top}$.
	Since by the definition \eqref{eq:W_def} of $W$,
	\begin{align*}
	W[k]^{\top}
	\Upsilon[k+1] W[k]	
	=
	J[k]^{\top}
	R[k+1]
	J[k]
	- T[k+1],		
	\end{align*}
	it follows that the matrix inequality \eqref{eq:matrix_ineq1}
	can be rewritten as
	\begin{equation}
	\label{eq:matrix_ineq2}
	\Omega_1[k]
	+ T[k+1]
	-
	\Omega_2[k+1]	
	-
	J[k]^{\top}
	R[k+1]
	J[k]
	\succ 0.			
	\end{equation}
	Moreover, it follows from the Schur complement formula that
	the matrix inequality \eqref{eq:matrix_ineq2}	
	is feasible if and only if
	\begin{equation}
	\label{eq:LMI_k_ver}
	\begin{bmatrix}
	\Gamma[k] & \Phi[k] \\
	\star & \Xi[k]		
	\end{bmatrix} \succ 0,
	\end{equation}
	where $	\Gamma[k] :=
	\Omega_1[k]
	+ T[k+1]$ and
	\begin{align*}
	\Xi[k] &:=
	\text{diag}(P[k+1],I, R[k+1]) \\
	\Phi[k] &:=
	\begin{bmatrix}
	H[k]^{\top} & 
	J[k]^{\top}
	\end{bmatrix}
	\Xi[k].
	\end{align*}
	On the other hand,
	\begin{align*}
	\begin{bmatrix}
	\Gamma[k]\! & \Phi[k] \\
	\star\! & \Xi[k]		
	\end{bmatrix}		
	\!=\!
	\sum_{p=1}^2  \sum_{q=1}^2 \alpha_p[k] \alpha_q[k+1]
	\!
	\begin{bmatrix}
	\Gamma_{p,q,\bm j[k]}\! & \Phi_{p,q,\bm j[k]} \\
	\star\! & \Xi_{q,\bm j[k]}
	\end{bmatrix}\!.
	\end{align*}
	Since $\alpha_1$ and $\alpha_2$ satisfy \eqref{eq:alpha_prop},
	the inequality \eqref{eq:LMI_k_ver} holds for every $k \geq 0$ 
	if
	the LMI in \eqref{eq:LMI_GS} is feasible for every $p,q=1,2$ and 
	$\bm j \in {\mathcal J}_{n_y}$.
	
	From \eqref{eq:V_k_w_z}, we see that if $w[k] = 0$ for every $k\in \mathbb{Z}_+$, then
	$V_{N} \leq \sigma^{2N} V_0$. Hence the decay rate of $x[k]$ is less than or equal to $\sigma$.
	In addition, assuming $x[0] = 0$, we find from
	\eqref{eq:V_k_w_z} with $\sigma = 1$ that 
	the $\ell^2$-gain is less than or equal to $\gamma$.
	
	

	2. Let us proceed to the case $\delta_i \not= 0$ and prove
	practical stability. Define 
	\[
	\bar \delta := \max_{i=1,\dots,n_y} \left((\Delta_{\max} \lambda_i + 1)
	\bar w  \|D\|_{\infty}  + \delta_i\right).
	\]
	For every $i=1,\dots,n_y$, 
	there exist $f_i[k]$, $g_i[k] \in \mathbb{R}$ such that
	$e_i[k] + Dw[k]= f_i[k] + g_i[k]$ and
	\begin{gather}
	|f_i[k]| \leq \lambda_i E[k] \cdot |c_ix[k]|,\quad
	|g_i[k]| \leq \bar \delta,
	\label{eq:f_g_cond}
	\end{gather}
	where $c_i$ is the $i$th row vector of $C$.
	Define
	\begin{gather*}
	f[k] := \begin{bmatrix} f_1[k] & \hspace{-3pt}\cdots\hspace{-3pt} & f_{n_y}[k] \end{bmatrix}\!^{\top}\!\!,~
	g[k] := \begin{bmatrix} g_1[k] & \hspace{-3pt}\cdots\hspace{-3pt} &g_{n_y}[k]\end{bmatrix}\!^{\top}\\
	z[k] := (A+BK[k]C) x[k] + BK[k]f[k].
	\end{gather*}
	Then 
	\begin{align}
	V_{k+1}&= 
	z[k]^{\top} 
	P[k+1] z[k]
	+ 
	2z[k]^{\top} 
	P[k+1] BK[k] g[k] \notag \\ 
	&\quad + (BK[k] g[k])^{\top}
	P[k+1] BK[k] g[k]. \label{eq:V_k1_equation}
	\end{align}
	From \eqref{eq:f_g_cond},
	there exist costants $\rho_1 > 0$ and $\rho_2 > 0$ such that
	\begin{align}
	\label{eq:product_term}
	z[k]^{\top} 
	P[k+1] BK[k] g[k] &\leq 
	\rho_1 \|x[k]\| \bar \delta\\
	\label{eq:first_term_ineq} 
	(BK[k] g[k])^{\top}
	P[k+1] BK[k] g[k] &\leq 
	\rho_2 \bar \delta^2
	\end{align}
	for every $k \in \mathbb{Z}_+$.
	Moreover, we have from Young's inequality that
	\begin{equation}
	\label{eq:second_term_ineq}
	2  \|x[k]\| \bar \delta
	\leq 
	\theta \|x[k]\|^2  + \bar \delta^2 / \theta \qquad \forall \theta > 0.
	\end{equation}
	
	If the LMI in \eqref{eq:LMI_GS}
	is feasible for every $p$, $q$, and $\bm j $,
	then there exists $\sigma_1 \in (0,\sigma^2)$ such that
	$\sigma_1 V_k - 
	z[k]^{\top} 
	P[k+1] z[k]
	> 0 
	$
	for every $k \in \mathbb{Z}_+$.
	It follows from \eqref{eq:V_k1_equation}--\eqref{eq:second_term_ineq} that
	\begin{align*}
	\sigma_1 V_k - V_{k+1}
	&>
	-\rho_3 \theta V_k - (\rho_1/\theta + \rho_2) \bar \delta^2
	\end{align*}
	for some $\rho_3 > 0$.
	Hence if we choose sufficiently small $\theta >0$ satisfying
	$\sigma_2 := \sigma_1 + \rho_3 \theta < 1$, then we have
	\begin{equation*}
	V_{k+1} < \sigma_2 V_k+  (\rho_1/\theta + \rho_2) \bar \delta^2 \qquad  \forall k \in \mathbb{Z}_+,
	\end{equation*}
	and hence the state convergence 
	$
	\limsup_{k\to \infty} \|x[k]\| \leq \rho 
	\bar \delta
	$
	is satisfied for some $\rho >0$.
	Thus the closed-loop system practically stable with the ultimate bound defined by
	\eqref{eq:ultimate_bound}.
\end{proof}


\subsection{Switching control}
Define 
$
\Delta_{\min} =: \Delta_0
< \Delta_1 < \dots < \Delta_{r} := \Delta_{\max}
$
and
$\mathcal{E}_1 := [\Delta_0,\Delta_1]$, $\mathcal{E}_p := (\Delta_{p-1}, \Delta_p]$
for all $p=2,\dots,r$.
Given feedback gains $K_p$ $(p=1,\dots,r)$, we
set a switching gain $K(E[k])$ to be
\begin{equation}
\label{eq:Swiching_control}
K(E[k]) := 
K_p \qquad \text{if~~}E[k] \in \mathcal{E}_p.
\end{equation}

Similarly to the gain-scheduling control \eqref{eq:GS_control},
we obtain a sufficient condition for closed-loop stability with
the switching control \eqref{eq:Swiching_control}.
\begin{theorem}
	\label{thm:Stability_SC}
	Consider the closed-loop system discussed in Section II, and
	let $\sigma \in (0,1]$.
	The closed-loop system is practically stable with an ultimate bound in \eqref{eq:ultimate_bound} and
	with a decay rate $\sigma$,
	and its $\ell^2$-gain is less than or equal to $\gamma$
	if
	there exist a positive definite matrix $P \in \mathbb{R}^{n_x \times n_x}$, a
	symmetric matrix with nonnegative entries 
	\begin{equation*}
	\begin{bmatrix}
	L_{p,\bm j} & N_{p,\bm j} \\
	\star & M_{p,\bm j}
	\end{bmatrix} \in \mathbb{R}^{2n_y \times 2n_y},
	\end{equation*}
	and a symmetric matrix $Z_{p,\bm j}\in \mathbb{R}^{n_y \times n_y}$
	($p=1,\dots,r$, $\bm j \in {\mathcal J}_{n_y}$) such that
	\begin{align}
	\label{eq:LMI_SC}
	\begin{bmatrix}
	\Gamma_{p,\bm j} & \Phi_{p,\bm j} \\
	\star & \Xi_{p,\bm j}
	\end{bmatrix}
	\succ 0
	\end{align}
	for all $p=1,\dots,r$ and ${\bm j} = (j_1,\dots,j_{n_y}) \in {\mathcal J}_{n_y}$,
	where 
	\begin{align*}
	N_{p,\bm j}^+ &:=
	N_{p,\bm j}
	+ N_{p,\bm j}^{\top},\quad 
	N_{p,\bm j}^- :=
	N_{p,\bm j}
	- N_{p,\bm j}^{\top} \\
	\Lambda_{\bm j}
	&:=	
	{\rm diag}
	(\lambda_1j_1,\dots, \lambda_{n_y} j_{n_y}) \\
	R_{p,\bm j} 
	&:=
	\begin{bmatrix}
	L_{p,\bm j}+ M_{p,\bm j}
	+ N_{p,\bm j}^+ &  
	L_{p,\bm j} - M_{p,\bm j} 
	- N_{p,\bm j}^-\\
	\star  & 
	L_{p,\bm j} + M_{p,\bm j} 
	+ Z_{p,\bm j}
	\end{bmatrix} \\
	\Gamma_{p,\bm j} &:=
	\text{diag}(\sigma^2 P, \gamma^2 I, N_{p,\bm j}^+ + Z_{p,\bm j}) \\
	\Xi_{p,\bm j} &:=
	\text{diag}(P, I , R_{p,\bm j})  \\
	\Phi_{p,\bm j} &:=
	\begin{bmatrix}
	A+BK_pC & BK_pD& BK_p \\ 
	F & 0 & 0 \\
	\Delta_p\Lambda_{\bm j}C & \Delta_p\Lambda_{\bm j} D& 0 \\
	0 & 0 &I
	\end{bmatrix}^{\top} 
	\Xi_{p,\bm j}
	\end{align*}
	\vspace{-8pt}
\end{theorem}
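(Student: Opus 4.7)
The plan is to follow the structure of the proof of Theorem~\ref{thm:Stability_GS} but exploit the simplification coming from switching: at each time $k$ the controller uses exactly one gain $K_{p(k)}$ where $p(k) \in \{1,\dots,r\}$ is the unique index with $E[k] \in \mathcal{E}_{p(k)}$, and the theorem asks only for a common Lyapunov matrix $P$, not a parameter-dependent family. So I would take the time-invariant Lyapunov function $V(x) := x^{\top} P x$ and aim to prove the one-step inequality
\begin{equation*}
\sigma^2 V_k - V_{k+1} + \gamma^2 \|w[k]\|^2 - \|z[k]\|^2 > 0 \qquad \forall k \in \mathbb{Z}_+,
\end{equation*}
which, exactly as in the proof of Theorem~\ref{thm:Stability_GS}, yields the decay-rate bound when $w\equiv 0$ and the $\ell^2$-gain bound when $x[0]=0$.

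First I would restate the error constraint. In mode $p$ we have $E[k] \le \Delta_p$, so \eqref{eq:error_cond} (with $\delta_i=0$) implies $|e_i[k]| \le \lambda_i \Delta_p |y_i[k]|$. Picking signs $j_i[k] \in \{1,-1\}$ with $j_i[k] y_i[k] = |y_i[k]|$, I can write this as $W_{p,\bm j[k]} \xi[k] \ge 0$ with $\xi[k]=[x[k]^{\top},w[k]^{\top},e[k]^{\top}]^{\top}$ and
\begin{equation*}
W_{p,\bm j} := \begin{bmatrix} I & I \\ I & -I \end{bmatrix}
\begin{bmatrix} \Delta_p \Lambda_{\bm j} & 0 \\ 0 & I \end{bmatrix}
\begin{bmatrix} C & D & 0 \\ 0 & 0 & I \end{bmatrix},
\end{equation*}
which is the mode-$p$ analogue of the matrix $W[k]$ in \eqref{eq:W_def}, using the worst-case $E[k]=\Delta_p$. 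Because tightening $E[k]$ only shrinks the admissible set of $\xi[k]$, verifying the Lyapunov inequality over $W_{p,\bm j}\xi \ge 0$ is sufficient.

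Next I would apply the S-procedure with an entrywise-nonnegative multiplier $\begin{bmatrix} L_{p,\bm j} & N_{p,\bm j} \\ \star & M_{p,\bm j}\end{bmatrix}$, introduce an auxiliary symmetric $Z_{p,\bm j}$ to handle the cross terms coming from $N_{p,\bm j}^{-}$ (this is the same algebraic trick that produces $R_{q,\bm j}$ and $T[k]$ in the proof of Theorem~\ref{thm:Stability_GS}), and reduce the requirement to a single matrix inequality per pair $(p,\bm j)$. A Schur complement on the $P$-block gives exactly the LMI in \eqref{eq:LMI_SC}. The essential simplification over the gain-scheduling case is that there is no convex combination across consecutive times: since $P$ and $K_p$ are held constant on mode $p$ and $V_{k+1}$ is built from the same $P$, no $\alpha_p[k]\alpha_q[k+1]$ expansion is needed, and the index runs only over $(p,\bm j)$ rather than $(p,q,\bm j)$.

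Finally, for general $\delta_i \ge 0$, I would copy the perturbation argument from Part~2 of the proof of Theorem~\ref{thm:Stability_GS} verbatim: split $e_i[k] + (Dw[k])_i = f_i[k] + g_i[k]$ with $|f_i[k]| \le \lambda_i E[k]|c_i x[k]|$ and $|g_i[k]| \le \bar\delta$, expand $V_{k+1}$, bound the cross term by Young's inequality, and use feasibility of \eqref{eq:LMI_SC} (which gives strict decrease of $V$ along the $f[k]$-part with some margin $\sigma_1 < \sigma^2$) to conclude $V_{k+1} < \sigma_2 V_k + \mathrm{const}\cdot\bar\delta^2$ for some $\sigma_2 < 1$, yielding the ultimate bound \eqref{eq:ultimate_bound}. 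The main obstacle I anticipate is keeping the S-procedure bookkeeping clean when replacing the time-varying $E[k]$ by the mode bound $\Delta_p$ and verifying that the resulting LMI is indeed necessary of having the form advertised; once that alignment is done, everything else is a direct specialization of the gain-scheduling proof.
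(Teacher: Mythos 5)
Your proposal is correct and matches the paper's own (largely omitted) argument: the paper likewise replaces the parameter-dependent Lyapunov function of Theorem~\ref{thm:Stability_GS} with a common $V(x)=x^{\top}Px$ and reuses the same S-procedure, Schur-complement, and perturbation steps, which is exactly what you spell out. Your observation that bounding $E[k]$ by $\Delta_p$ within mode $p$ only enlarges the constraint set, so that a single LMI per pair $(p,\bm j)$ suffices without any $\alpha_p[k]\alpha_q[k+1]$ expansion, is the right way to fill in the one detail the paper leaves implicit.
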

\begin{proof}
	Instead of the time-dependent Lyapunov function $V(k,x)$ 
	in \eqref{eq:parameter_dep_lyap}, 
	we employ a common Lyapunov function
	$V(x) := x^{\top} Px$.
	The rest of the proof follows the same lines as that of Theorem \ref{thm:Stability_GS}, 
	it is therefore omitted.
\end{proof}
\begin{remark}
	Since we use common Lyapunov functions in Theorem \ref{thm:Stability_SC},
	the analysis for switching control
	is more conservative than that for gain-scheduling control.
	However, the switching controller in \eqref{eq:Swiching_control} 
	is more flexible for the choice of
	control gains than the gain-scheduling controller in \eqref{eq:GS_control}.
\end{remark}

\section{Numerical Example}
Consider the unstable batch reactor studied 
in \cite{Rosenbrock1972}, whose continuous-time model has the following
system matrix $A_c$ and input matrix $B_c$:
\begin{align*}
A_c &:= 
\begin{bmatrix}
1.38 & -0.2077 & 6.715 & -5.676 \\
-0.5814 & -4.29 & 0 & 0.675 \\
1.067 & 4.273 & -6.654 & 5.893 \\
0.048 & 4.273 & 1.343 & -2.104
\end{bmatrix}\\
B_c &:=
\begin{bmatrix}
0 & 0 \\
5.679 & 0 \\
1.136 &-3.146 \\
1.136 & 0 
\end{bmatrix}.
\end{align*}
Here we discretize this system with a sampling period $h=0.01$
and use $A := e^{A_ch}$ and $ B := \int^{h}_0 e^{A_ct}B_c dt$ for the plant \eqref{eq:plant}.
We set the other matrices $C$, $D$, and $F$ to be $C = D = I$ and $F = [0,~1,~0,~0]$.

For the gain-scheduling control and
the switching control,
we design the following two linear quadratic regulators:
\begin{align*}
K_1 &:=
\begin{bmatrix}
-0.5466 &  -0.3732 &  -0.4618  & -0.0421 \\
1.7706 &   0.1643  &  1.2380  & -0.7572
\end{bmatrix} \\
K_2 &:=
\begin{bmatrix}
3.0966 &  -6.2406 &  -0.8145 &  -7.4480 \\
10.8086  & -0.2883   & 8.7038  & -4.2906
\end{bmatrix}.
\end{align*}
Fix the lower bound $\Delta_{\min} = 0.05$ and the
weighting constants $\lambda_i = 1$ ($i=1,\dots,4$)
of the event-triggered TOD protocol.
We see from
Theorem \ref{thm:Stability_SC} in the case with a single gain that
the low gain $K_1$ 
and the high gain $K_2$
allow $\Delta_{\max} \leq 0.214$ and
$\Delta_{\max} \leq 0.648$ without
compromising practical stability.
On the other hand, consider the usual event triggered control that sends
all the measurements at the times $\{\ell_m\}$ defined by
$\ell_0 = 0$ and 
\[
l_{m+1} \!:=\! \min\{ k \geq  l_m: \|y[l_{m}] - y[k]\| > \Delta_{\max} \|y[k]\| + 0.01\}
\]
and produces the control input 
\[
u[k] = K y[l_m]\qquad \forall k =l_m,\dots,l_{m+1}-1,~\forall m \in \mathbb{Z}_+.
\]
We see from the discrete-time version of Corollary III.6 in \cite{Donkers2012_EV}
that this event-triggered control
achieves practical stability if $\Delta_{\max} \leq 0.212$ for the low gain $K = K_1$
and if $\Delta_{\max} \leq 0.548$ for the high gain $K = K_2$.
Hence we see that the obtained sufficient conditions are not conservative
compared with the results of \cite{Donkers2012_EV} for event-triggered control.

As we see later,
however, high gain control may lead to the increase of measurement transmissions
in the state-steady phase.
Hence we here design the gain-scheduling control \eqref{eq:GS_control}
and the switching control \eqref{eq:Swiching_control}  so that
the controller chooses 
a low gain for small $E[k]$ and 
a high gain for large $E[k]$. 
Such adaptive control exploits another advantage of high gain control,
a fast response, because 
$E[k]$ is large at the transient phase.
The gain-scheduling control \eqref{eq:GS_control}
allows  $\Delta_{\max} \leq 0.351$ 
from Theorem \ref{thm:Stability_GS},
and
the switching control \eqref{eq:Swiching_control} with 
\[\Delta_{\min} = \Delta_1 < \Delta_2 = 2\Delta_{\min} < 
\Delta_3 = \Delta_{\max}
\]
allows  $\Delta_{\max} \leq 0.280$
from
Theorem \ref{thm:Stability_SC} without compromising 
practical stability.

Fig.~\ref{fig:l2_gain} depicts an upper bound of the $\ell^2$-gain
that is achieved by each controllers.
Since our analysis investigates the worst case among possible
errors $\{E[k]\}$, the $\ell^2$-gains by the adaptive controllers
are larger than the $\ell^2$-gain by the high gain controller $K_2$.
However, the adaptive controllers achieve small $\ell^2$-gains
than the low gain controller $K_1$ for $\Delta_{\max}\geq 0.21$,
although they use $K_1$ for small $E[k]$. 

\begin{figure}
	\centering
	\includegraphics[width = 8cm]{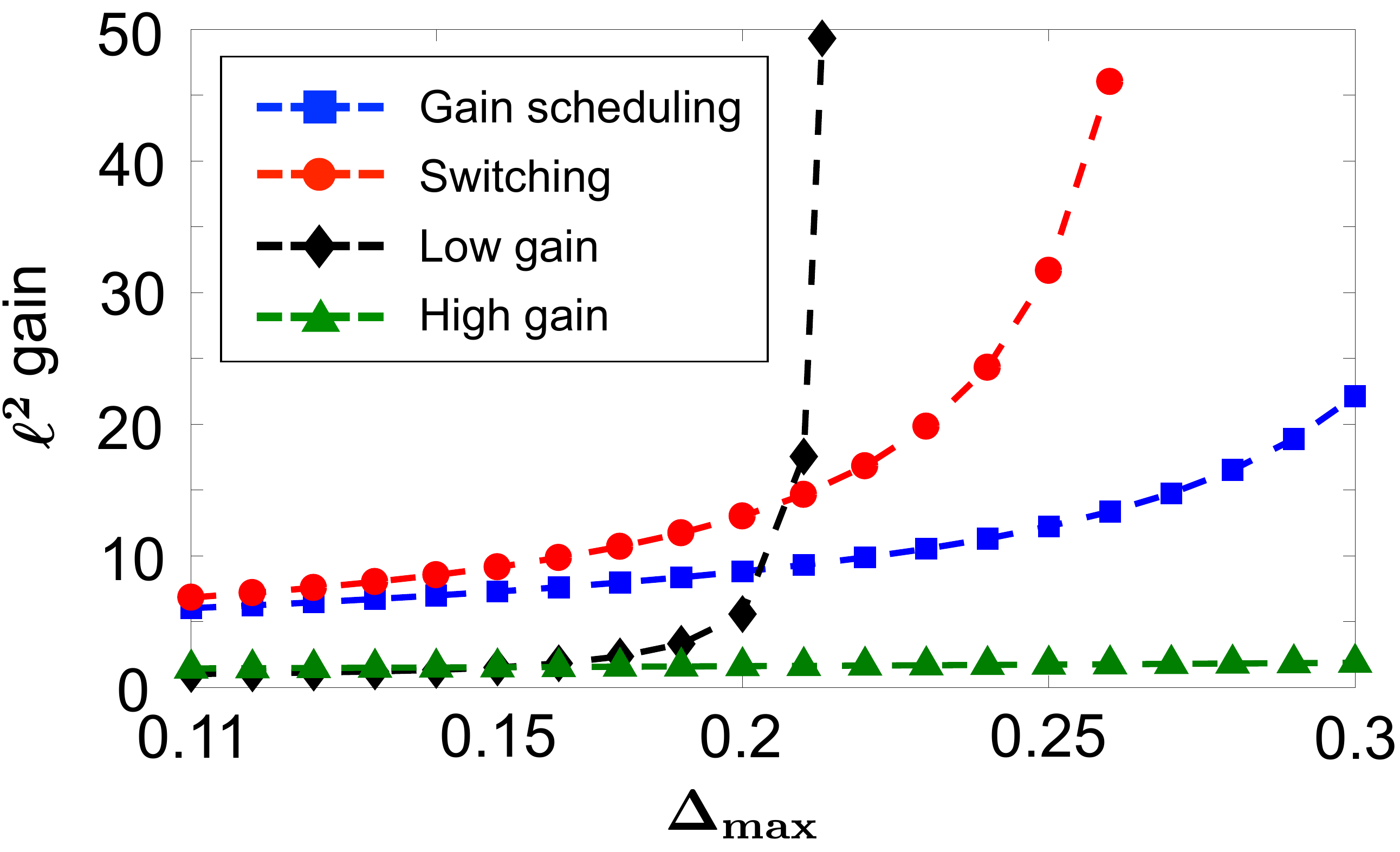}
	\caption{$\ell^2$-gain.}
	\label{fig:l2_gain}
	\vspace{-0pt}
\end{figure}

For the simulation of the time response,
we set the parameters $\Delta_{\max}$, $\Delta_{\min}$, $\lambda_i$, and $\delta_i$ of the event-triggered TOD protocol to be
$\Delta_{\max} = 0.20$, $\Delta_{\min} = 0.05$, $\lambda_i = 1$, and 
$\delta_i = 0.01$ for all $i=1,\dots,4$.
The initial state $x[0]$ is given by $x[0] = [0.1,~\!-0.1,~\!0.1,~\!0.1]^{\top}$, and
the noise $w[k]$ is due to quantization that rounds two digits to the right of the decimal point.
Fig.~\ref{fig:Time_response1} illustrates the time response of the state $x =[x_1,x_2,x_3,x_4]^{\top}$ 
under 
each control.
The gain-scheduling control and the switching control exhibit 
a faster response than the low gain control with $K_1$.
Indeed, from Theorems \ref{thm:Stability_GS} and \ref{thm:Stability_SC},
upper bounds of the decay rates by these adaptive controllers
are $0.993$ and $0.996$, respectively,
whereas an bound by the low gain controller is $1$.
Furthermore, compared with the adaptive controllers,
the steady-state response by 
the high gain control with $K_2$ oscillates with high frequency due to 
event triggering and quantization, which
leads to the increase of measurement transmissions as we see next.

Table~\ref{table:transmitted_state} shows
the total number of measurements transmitted in the time interval $[0,4)$ for each control.
We calculate the average of the total numbers
for $10^4$ initial states that are uniformly distributed in the box 
$\{x \in \mathbb{R}^4:~ \|x\|_{\infty} < 1\}$, and
if all the measurements are transmitted at every sampling time,
then the total number is $4n_y/h = 1600$. On the other hand,
the average total number of transmitted measurements by
the usual event-triggered control  discussed above 
is 119.59 for the low gain $K = K_1$ and  249.48 for 
the high gain $K = K_2$.
Therefore, the event-triggered TOD protocol can reduce the number
of transmitted measurements in this example.
We also see that
the adaptive controllers
requires less data transmissions for practical stability.


\begin{figure}
	\centering
	\subcaptionbox{Euclidean norm of state $x$.
		\label{fig:norm}}
	{\includegraphics[width = 7.9cm,clip]{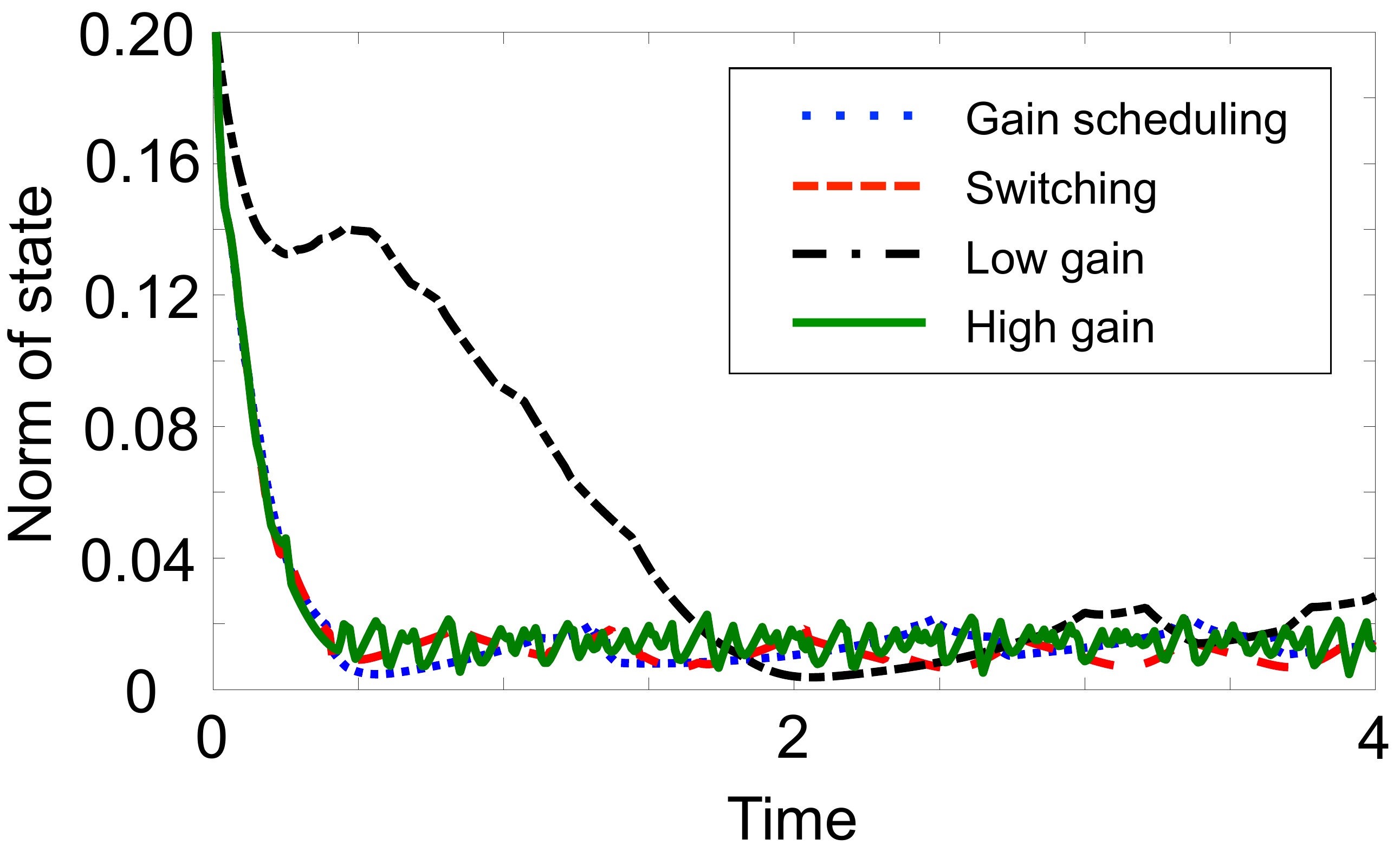}} \vspace{10pt}\\
	\subcaptionbox{State $x_2$. 
		\label{fig:x_3}}
	{\includegraphics[width = 7.9cm,clip]{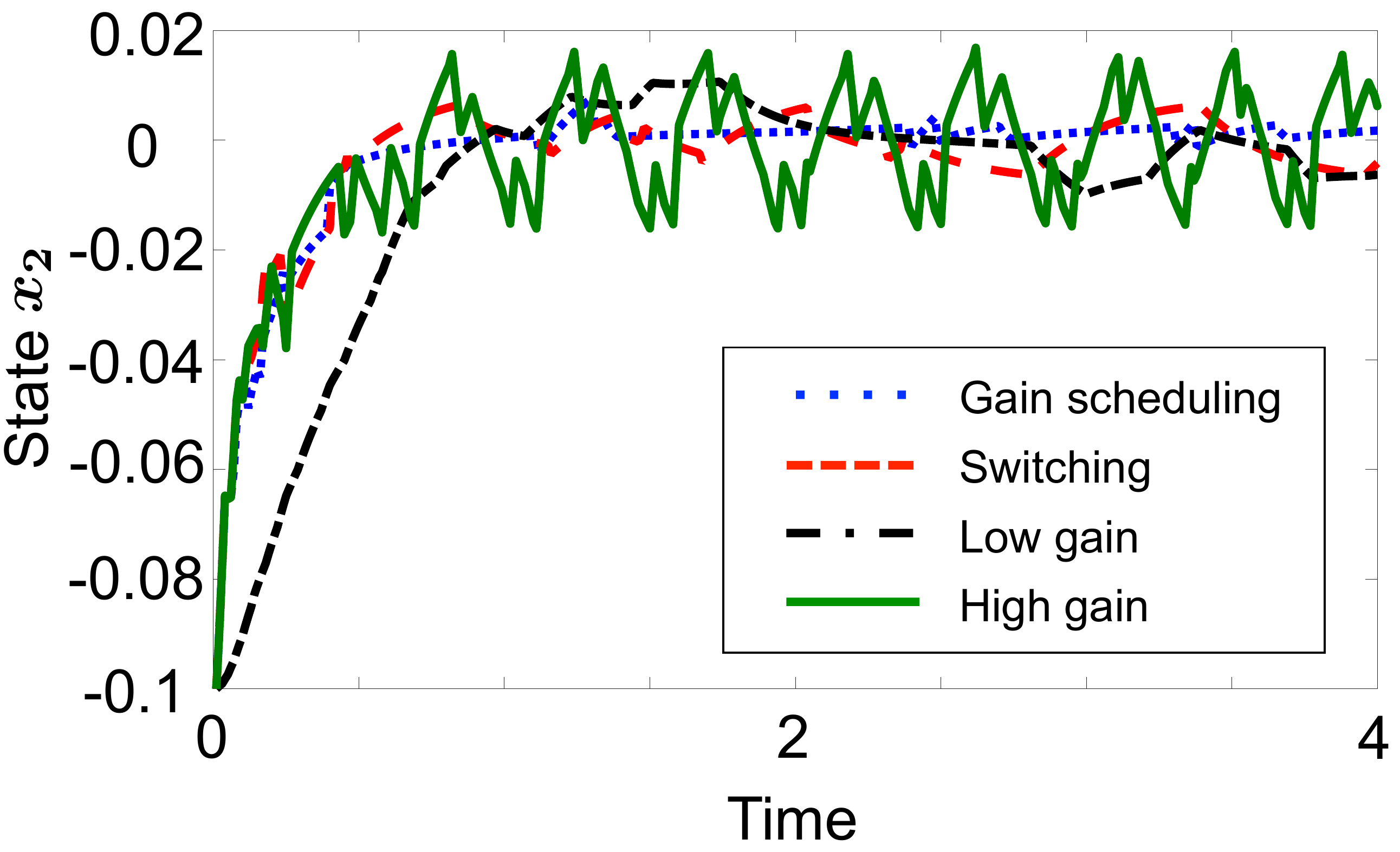}}
	\caption{Time Responses. \label{fig:Time_response1}}
\end{figure}


\begin{table}[ht]
	\caption{Average of the total numbers of measurements transmitted in the time interval $[0,4)$ for $10^4$ samples.}
	\label{table:transmitted_state}
	\centering
	\begin{tabular}{cccc} \toprule
		Gain scheduling \eqref{eq:GS_control} 
		& Switching \eqref{eq:Swiching_control} 
		& $K_1$ & $K_2$ \\ \midrule
		92.02 &  74.13 & 113.17 & 162.61
		\\ \bottomrule
	\end{tabular}
\end{table}

\section{Conclusion}
We studied the stability and $\ell^2$-gain analysis of adaptive control systems with
the event-triggered TOD protocol.
Focusing on gain-scheduling control and switching control,
we obtained sufficient conditions for the practical stability
and upper bounds on the $\ell^2$-gain of the closed-loop system.
Future work will focus on 
the quantitative analysis of how many transmissions
we can save and
the co-design of controllers
and protocol parameters.
Another interesting direction
is to extend the proposed method 
to model-based event-triggered control developed, e.g.,  in
\cite{Garcia2013, Heemels2013, Hao2017}.



\begin{thebibliography}{10}
	\providecommand{\url}[1]{#1}
	\csname url@samestyle\endcsname
	\providecommand{\newblock}{\relax}
	\providecommand{\bibinfo}[2]{#2}
	\providecommand{\BIBentrySTDinterwordspacing}{\spaceskip=0pt\relax}
	\providecommand{\BIBentryALTinterwordstretchfactor}{4}
	\providecommand{\BIBentryALTinterwordspacing}{\spaceskip=\fontdimen2\font plus
		\BIBentryALTinterwordstretchfactor\fontdimen3\font minus
		\fontdimen4\font\relax}
	\providecommand{\BIBforeignlanguage}[2]{{%
			\expandafter\ifx\csname l@#1\endcsname\relax
			\typeout{** WARNING: IEEEtran.bst: No hyphenation pattern has been}%
			\typeout{** loaded for the language `#1'. Using the pattern for}%
			\typeout{** the default language instead.}%
			\else
			\language=\csname l@#1\endcsname
			\fi
			#2}}
	\providecommand{\BIBdecl}{\relax}
	\BIBdecl
	
	\bibitem{Bamieh1992}
	B.~A. Bamieh and J.~B. Pearson, ``{A general framework for linear periodic
		systems with applications to $\mathscr{H}_{\infty}$ sampled-data control},''
	\emph{IEEE Trans. Automat. Control}, vol.~37, pp. 418--435, 1992.
	
	\bibitem{Yamamoto1994}
	Y.~Yamamoto, ``{A function space approach to sampled data control systems and
		tracking problems},'' \emph{IEEE Trans. Automat. Control}, vol.~39, pp.
	703--713, 1994.
	
	\bibitem{Anderson1992}
	B.~D.~O. Anderson and J.~P. Keller, ``A new approach to the discretization of
	continuous-time controllers,'' \emph{IEEE Trans. Automat. Control}, vol.~37,
	pp. 214--223, 1992.
	
	\bibitem{Araki1996}
	M.~Araki, Y.~Ito, and T.~Hagiwara, ``Frequency response of sampled-data
	systems,'' \emph{Automatica}, vol.~32, pp. 483--497, 1996.
	
	\bibitem{Tabuada2007}
	P.~Tabuada, ``Event-triggered real-time scheduling of stabilizing control
	tasks,'' \emph{IEEE Trans. Automat. Control}, vol.~52, pp. 1680--1685, 2007.
	
	\bibitem{Heemels2008}
	W.~P. M.~H. Heemels, J.~Sandee, and P.~van~den Bosch, ``Analysis of
	event-driven controllers for linear systems,'' \emph{Int. J. Control},
	vol.~81, pp. 571--590, 2008.
	
	\bibitem{Shoukry2016}
	Y.~Shoukry and P.~Tabuada, ``Event-triggered state observers for sparse sensor
	noise/attacks,'' \emph{IEEE Trans. Automat. Control}, vol.~61, pp.
	2079--2091, 2016.
	
	\bibitem{Dolk2017}
	V.~S. Dolk, P.~Tesi, C.~De~Persis, and W.~P. M.~H. Heemels, ``Event-triggered
	control systems under denial-of-service attacks,'' \emph{IEEE Trans. Control
		Network Systems}, vol.~4, pp. 93--105, 2017.
	
	\bibitem{Cetinkaya2017}
	A.~Cetinkaya, H.~Ishii, and T.~Hayakawa, ``Networked control under random and
	malicious packet losses,'' \emph{IEEE Trans. Automat. Control}, vol.~62, pp.
	2434--2449, 2017.
	
	\bibitem{Cervantes2015}
	M.~G. Villarreal-Cervantes, J.~F. Guerrero-Castellanos,
	S.~Ram\'{i}rez-Mart\'{i}nez, and J.~P. S\'{a}nchez-Santana, ``Stabilization
	of a $(3,0)$ mobile robot by means of an event-triggered control,'' \emph{ISA
		Trans.}, vol.~58, pp. 605--613, 2015.
	
	\bibitem{Socas2017}
	R.~Socas, S.~Dormido, and R.~Dormido, ``{Optimal threshold setting for
		event-based control strategies},'' \emph{IEEE Access}, vol.~5, pp.
	2880--2893, 2017.
	
	\bibitem{Heemels2012}
	W.~P. M.~H. Heemels, K.~H. Johansson, and P.~Tabuada, ``An introduction to
	event-triggered and self-triggered control,'' in \emph{Proc. 51st IEEE CDC},
	2012.
	
	\bibitem{Liu2014}
	Q.~Liu, Z.~Wang, X.~He, and D.~H. Zhou, ``A survey of event-based strategies on
	control and estimation,'' \emph{Syst. Sci. Control Eng.}, vol.~2, pp.
	6238--6243, 2014.
	
	\bibitem{Walsh2001}
	G.~C. Walsh and H.~Ye, ``Scheduling of networked control systems,'' \emph{IEEE
		Control Syst. Mag.}, vol.~21, pp. 57--65, 2001.
	
	\bibitem{Walsh2002}
	G.~C. Walsh, H.~Ye, and L.~G. Bushnell, ``{Stability analysis of networked
		control systems},'' \emph{IEEE Trans. Control Syst. Technol.}, vol.~10, pp.
	438--446, 2002.
	
	\bibitem{Nesic2004}
	D.~Ne\v{s}i\'{c} and A.~R. Teel, ``Input-output stability properties of
	networked control systems,'' \emph{IEEE Trans. Automat. Control}, vol.~49,
	pp. 1650--1667, 2004.
	
	\bibitem{Davic2007}
	D.~B. Da\v{c}i\'c and D.~Ne\v{s}i\'{c}, ``Quadratic stabilization of linear
	networked control systems via simultaneous protocol and controller design,''
	\emph{Automatica}, vol.~43, pp. 1145--1155, 2007.
	
	\bibitem{Donkers2011}
	M.~C.~F. Donkers, W.~P. M.~H. Heemels, N.~van~de Wouw, and L.~Hetel,
	``Stability analysis of networked control systems using a switched linear
	systems approach,'' \emph{IEEE Trans. Automat. Control}, vol.~56, pp.
	2101--2115, 2011.
	
	\bibitem{Heemels2013}
	W.~P. M.~H. Heemels, M.~C.~F. Donkers, and A.~R. Teel, ``Periodic
	event-triggered control for linear systems,'' \emph{IEEE Trans. Automat.
		Control}, vol.~58, pp. 847--861, 2013.
	
	\bibitem{Heemels2013_Automatica}
	W.~P. M.~H. Heemels and M.~C.~F. Donkers, ``Model-based periodic
	event-triggered control for linear systems,'' \emph{Automatica}, vol.~49, pp.
	698--711, 2013.
	
	\bibitem{Donkers2012_EV}
	M.~C.~F. Donkers and W.~P. M.~H. Heemels, ``{Output-based event-triggered
		control with guaranteed $\mathcal{L}_{\infty}$-gain and improved and
		decentralized event-triggering},'' \emph{IEEE Trans. Automat. Control},
	vol.~57, pp. 1362--1376, 2012.
	
	\bibitem{Garcia2013}
	E.~Garcia and P.~J. Antsaklis, ``Model-based event-triggered control for
	systems with quantization and time-varying network delays,'' \emph{IEEE
		Trans. Automat. Control}, vol.~58, pp. 422--434, 2013.
	
	\bibitem{Hao2017}
	F.~Hao and H.~Yu, ``Stability of model-based event-triggered control systems: a
	separation property,'' \emph{Int. J. Systems Science}, vol.~48, pp.
	1035--1047, 2017.
	
	\bibitem{Johansson1998}
	M.~Johansson and A.~Rantzer, ``{Computation of piecewise quadratic Lyapunov
		functions for hybrid systems},'' \emph{IEEE Trans. Automat. Control},
	vol.~43, pp. 555--559, 1998.
	
	\bibitem{Feng2002}
	G.~Feng, ``{Stability analysis of piecewise discrete-time linear systems},''
	\emph{IEEE Trans. Automat. Control}, vol.~47, pp. 1108--1112, 2002.
	
	\bibitem{WakaikiCDC2015}
	M.~Wakaiki, K.~Okano, and J.~P. Hespanha, ``Control under clock offsets and
	actuator saturation,'' in \emph{Proc. 54th IEEE CDC}, 2015.
	
	\bibitem{Boyd1994}
	S.~Boyd, L.~E. Ghaoui, E.~Feron, and V.~Balakrishnan, \emph{Linear Matrix
		Inequalities in System and Control Theory}.\hskip 1em plus 0.5em minus
	0.4em\relax Philadelphia, PA: SIAM, 1994.
	
	\bibitem{Rosenbrock1972}
	H.~H. Rosenbrock, \emph{Computer-Aided Control System Design}.\hskip 1em plus
	0.5em minus 0.4em\relax New York: Academic Press, 1974.
	
\end{thebibliography}
\end{document}